\newtheorem{thm}{Theorem}[section]
\newtheorem{lem}[thm]{Lemma}
\newtheorem{prop}[thm]{Proposition}
\newtheorem{defn}[thm]{Definition}
\newtheorem{rem}[thm]{Remark}
\numberwithin{equation}{section}
\begin{document}

\title[The classification of algebras of level two]{The classification of algebras of level two}%

\author{A.Kh. Khudoyberdiyev}

\address{[A.\ Kh.\ Khudoyberdiyev] Institute of Mathematics,
National University of Uzbekistan, Tashkent, 100125, Uzbekistan.}
\email{khabror@mail.ru}

\begin{abstract}
This paper is devoted to the description of complex
finite-dimensional algebras of level two. We obtain the
classification  of algebras of level two in the varieties of
Jordan, Lie and associative algebras.
\end{abstract}

\maketitle

\textbf{Mathematics Subject Classification 2010}: 14D06; 14L30.

\textbf{Key Words and Phrases}: Closure of orbits, degeneration,
level of an algebra, Jordan algebra, Lie algebra, associative algebra.

\section{Introduction}

Important subjects playing a relevant role in Mathematics and
Physics are degenerations, contractions and deformations of
algebras.


Degenerations of non-associative algebras were the subject of
numerous papers (see for instance \cite{B2, Casas, GrunHall,
Kashuba1} and references given therein), and their research
continues actively.

The general linear group $GL(V )$ over a field $\mathbb{K}$ acts
on the vector space $V^{*}\otimes V^{*}\otimes
 V,$ the space of $\mathbb{K}$-algebra structures, by the
change of basis. For two $\mathbb{K}$-algebra structures $\lambda$
and $\mu$ we say that $\mu$ is a degeneration of $\lambda$ if
$\mu$ lies in the orbit closure of $\lambda$ with respect to Zariski topology (it is denoted by $\mu \rightarrow \lambda $).
The orbit closure problem in this form is about the classification
of all degenerations of a certain algebra structure of a fixed
dimension. This problem also depends on a complete classification
of the corresponding algebra structures. Both problems are highly
complicated even in small dimensions.

It is known that if ground field is $\mathbb{C}$, then closures of orbits in Zariski
and standard topologies coincide. Therefore, mainly the degenerations of complex objects are investigated.

It is well-known the closed relations between associative, Lie and
Jordan algebras. In fact, commutator product defined on
associative algebra gives us Lie algebra while symmetrized product
gives Jordan algebra. Moreover, any Lie algebra is isomorphic to a
subalgebra of a certain commutator algebra. The analogue of this
result is not true for Jordan algebras, that is, there are Jordan
algebras which can not be obtained from symmetrized product on
associative algebras.

For complex Lie algebras we have description of degenerations up to dimension four and for nilpotent ones up to
dimension six \cite{B1, Seeley}. In the case of Jordan algebras we have the description of degenerations up to dimension four \cite{Kashuba2}.

Since any $n$-dimensional algebra degenerates to the abelian
algebra (denoted by $\mathfrak{a}_n$), the lowest edges end on
$\mathfrak{a}_n.$ In \cite{Gorb1} Gorbatsevich described the
nearest-neighbor algebras to $\mathfrak{a}_n$ (algebras of level
one) in the degeneration graphs of commutative and skew-symmetric
algebras. In the work \cite{Khud}
 it was ameliorated and
correcting of some non-accuracies made in \cite{Gorb1}. Namely, a
complete list of algebras level one in the variety of
finite-dimensional complex algebras is obtained.

In fact, Gorbatsevich studied in \cite{Gorb2} a very interesting
notion closely related to degeneration: $\lambda\to\mu$ (algebras
$\lambda$ and $\mu$ not necessarily have the same dimension) if
$\lambda \oplus \mathfrak{a}_{k}$ degenerates to $\mu \oplus
\mathfrak{a}_{m}$ in the sense considered in this paper for some
suitable $k, m \geq 0.$ The corresponding first three levels of
such type of degenerations are completely classified in
\cite{Gorb2}.

In this paper we study the description of finite-dimensional
algebras of level two over the field of complex numbers. More
precisely, we obtain the classification of algebras of level two
in the varieties of Jordan, Lie and associative algebras.

In the multiplication table of an algebra omitted products are
assumed to be zero. Moreover,  due to commutatively and
anticommutatively of Jordan and Lie algebras, symmetric products for these algebras
are also omitted.

\section{Preliminaries.}

In this section we give some basic notions and concepts used
through the paper.

Let $\lambda$ be a $n$-dimensional algebra. We know that the
algebra $\lambda$ may be considered as an element  of the affine
variety $Hom(V\otimes V,V)$ via the  mapping $\lambda \colon
V\otimes V\to V$ over a field $\mathbb{K}$. The linear reductive group
$GL_n(\mathbb{K})$ acts on the variety of $n$-dimensional algebra $Alg_n$
via change of basis, i.e.,
\[(g*\lambda)(x,y)=g\Big(\lambda\big(g^{-1}(x),g^{-1}(y)\big)\Big), \quad  g \in GL_n(\mathbb{K}), \  \lambda \in Alg_n .\]

The orbits $Orb(-)$ under this action are the  isomorphism classes
of algebras. Note that solvable (respectively, nilpotent) algebras
of the same dimension also  form an invariant subvariety of the
variety of algebras under the mentioned action.

\begin{defn} \label{def1} An algebra $\lambda$ is said to
degenerate to an algebra $\mu$, if $Orb(\mu)$ lies in the Zariski
closure of $Orb(\lambda)$. We denote this by $\lambda\to\mu$.
\end{defn}

The degeneration $\lambda \rightarrow \mu $ is called {\it
trivial}, if $\lambda$ is isomorphic to $\mu.$ Non-trivial
degeneration $\lambda \rightarrow \mu $ is called {\it direct
degeneration} if there is no chain of non-trivial degenerations of
the form: $\lambda \rightarrow \nu \rightarrow \mu$.

\begin{defn} \label{def2}
The level of a $n$-dimesional algebra $\lambda$ is
the maximum length of a chain of direct degenerations, which, of
course, ends with the algebra $\mathfrak{a_n}$ (the algebra with
zero multiplication).
\end{defn}

Here we give the description of the algebras of level one.

\begin{thm}  \cite{Khud} \label{th level one} A $n$-dimensional $(n \geq 3)$ algebra is algebra of level one if and only if it is isomorphic to one of the following algebras:
\[\begin{array}{rlll}p_n: & e_1e_i = e_i,
& e_ie_1 = - e_i, & 2 \leq i \leq n; \\
 n_3 \oplus
\mathfrak{a}_{n-3}: & e_1e_2= e_3, & e_2e_1 = - e_3;\\
\lambda_2 \oplus \mathfrak{a}_{n-2}: &  e_1e_1 =e_2; \\
\nu_n(\alpha): & e_1e_1= e_1, & e_1e_i = \alpha e_i, & e_ie_1 =
(1-\alpha) e_i, \  2 \leq i \leq n, \ \alpha\in\mathbb{C}.
\end{array}\]
\end{thm}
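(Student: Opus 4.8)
I would base everything on a single reformulation. For any algebra $\mu$ the homothety $g_t=t^{-1}\operatorname{id}$ gives $g_t*\mu\to 0$, so every algebra degenerates to $\mathfrak a_n$; hence a non-abelian $\lambda$ has level one exactly when $\overline{\Orb(\lambda)}=\Orb(\lambda)\cup\{0\}$, and since $\Orb(\lambda)$ is invariant under scalar matrices this is equivalent to the projective orbit $GL_n\cdot[\lambda]$ being closed in $\mathbb P(W)$, where $W=V^{*}\otimes V^{*}\otimes V$ is the space of algebra structures. So the task is to classify the closed $GL_n$-orbits in $\mathbb P(W)$.

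First I would decompose $W$ as a $GL_n$-module. Using $V^{*}\otimes V^{*}=S^{2}V^{*}\oplus\Lambda^{2}V^{*}$ and contracting one covariant factor against $V$ to split off a copy of $V^{*}$ from each summand, one gets
\[
W\;=\;W_{s}\oplus W_{a}\oplus B_{1}\oplus B_{2},\qquad B_{1}\cong B_{2}\cong V^{*},
\]
with $W_{s}\subseteq S^{2}V^{*}\otimes V$ and $W_{a}\subseteq\Lambda^{2}V^{*}\otimes V$ the (irreducible) trace-free parts. My expectation is that the closed orbits are precisely the unique closed orbit of $\mathbb P(W_{s})$, the unique closed orbit of $\mathbb P(W_{a})$, and the closed orbits lying inside $\mathbb P(B_{1}\oplus B_{2})\cong\mathbb P(V^{*}\otimes\mathbb C^{2})$. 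These are identified as follows. In $V^{*}\otimes\mathbb C^{2}$ the group $GL_n$ acts only on the $V^{*}$ factor, transitively on $V^{*}\setminus\{0\}$, so the closed projective orbits are exactly the orbits inside the rank-one Segre locus, one for each $[a:b]\in\mathbb P^{1}$; writing such a structure as $\mu(x,y)=a\,\ell(x)y+b\,\ell(y)x$ and normalising, this $\mathbb P^{1}$ of orbits is exactly the family $\{\nu_n(\alpha):\alpha\in\mathbb C\}$ (the part with $a+b\neq 0$, $\alpha=a/(a+b)$) together with the single algebra $p_n$ (the part $a+b=0$). The closed orbit of $\mathbb P(W_{s})$ is its highest-weight orbit, consisting of the lines $[\phi^{\otimes 2}\otimes z]$ with $\phi(z)=0$, which is the isomorphism class of $\lambda_2\oplus\mathfrak a_{n-2}$; that of $\mathbb P(W_{a})$ consists of the lines $[\omega\otimes z]$ with $\operatorname{rank}\omega=2$ and $z\in\operatorname{rad}\omega$, the class of $n_3\oplus\mathfrak a_{n-3}$.

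Granting this, the ``if'' direction is the verification that each of the four is a closed orbit, i.e.\ $\overline{\Orb(\lambda)}=\Orb(\lambda)\cup\{0\}$. For $p_n$ and $\nu_n(\alpha)$ the structures $\mu(x,y)=a\ell(x)y+b\ell(y)x$ with $[a:b]$ fixed form a $GL_n$-stable \emph{linear} subspace of $W$, hence a closed set whose nonzero locus is a single orbit, so its closure adds only $0$. For $\lambda_2\oplus\mathfrak a_{n-2}$ and $n_3\oplus\mathfrak a_{n-3}$ one argues likewise inside the closed variety of decomposable tensors $\theta\otimes z$: nilpotency --- equivalently the closed $GL_n$-invariant condition $\mu(\mu^{2},V)=\mu(V,\mu^{2})=0$ --- carves out exactly the locus $z\in\operatorname{rad}\theta$, whose top-rank stratum is the single orbit $\Orb(\lambda)$ and whose remainder is $\{0\}$. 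The ``only if'' direction is then formal: a projective $GL_n$-variety always contains a closed orbit (Borel fixed point), so $\overline{GL_n\cdot[\lambda]}$ contains one of the four closed orbits $GL_n\cdot[L]$; hence $\lambda$ degenerates to $L$, and if $\lambda$ has level one this degeneration is trivial, so $\lambda\cong L$ lies on the list.

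The main obstacle is proving that these four are \emph{all} the closed orbits. A priori a closed orbit could have nonzero components in several of $W_{s},W_{a},B_{1},B_{2}$ at once --- the relevant tensor products do contain $\mathfrak{sl}_n$, so the moment-map cross-terms need not vanish and naive genericity arguments do not apply --- and one must exclude this, either via the Kempf--Ness / Hilbert--Mumford description of closed orbits, or concretely by a case analysis showing that every such ``mixed'' $\lambda$ admits a one-parameter subgroup contracting it to a proper sub-degeneration. This case analysis, with its low-dimensional bookkeeping, is exactly where \cite{Khud} repairs the inaccuracies of \cite{Gorb1}; one must also check that the $\nu_n(\alpha)$ are pairwise non-isomorphic, which holds because the substitution $\ell\mapsto\ell\circ g^{-1}$ leaves the ratio $[a:b]$ unchanged, so the $\mathbb P^{1}$ of closed orbits injects into the set of isomorphism classes.
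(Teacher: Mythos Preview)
The paper does not prove this theorem: it is quoted verbatim from \cite{Khud} as a preliminary result (note the citation immediately after \verb|\begin{thm}|), and no argument is supplied here. So there is no proof in the present paper to compare your strategy against.

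That said, your reformulation --- $\lambda$ has level one iff the projective orbit $GL_n\cdot[\lambda]\subset\mathbb P(W)$ is closed --- is correct, and is precisely Gorbatsevich's viewpoint in \cite{Gorb1}, the paper that \cite{Khud} set out to correct. Your identification of the four families with the highest-weight orbits in $\mathbb P(W_s)$ and $\mathbb P(W_a)$ and with the $\mathbb P^1$ of rank-one orbits inside $\mathbb P(V^{*}\otimes\mathbb C^{2})$ is also right, as is the ``if'' verification. What you flag as the ``main obstacle'' is in fact the entire content of the theorem: you must rule out closed orbits whose representatives have nonzero projection to more than one of the summands $W_s,W_a,B_1,B_2$, and you offer no mechanism beyond naming Hilbert--Mumford. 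Since the center of $GL_n$ acts on every summand of $W$ by the same character $t\mapsto t^{-1}$, there is no cheap separation by central one-parameter subgroups, and the required case analysis is substantial --- it is exactly the work carried out in \cite{Khud}. As written, your proposal is a sound and well-organised outline that correctly isolates where the difficulty lies, but it is not yet a proof.
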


Note that algebras $\lambda_2 \oplus \mathfrak{a}_{n-2}$ and $\nu_n\big(\frac 1 2\big)$ are Jordan algebras.

It is remarkable that the notion of degeneration considered in
\cite{Gorb2} is weaker than notions which are used in this paper.
For instance, the levels by Gorbatsevich's work of the algebras
$p_n$ and $\nu_n(\alpha)$ do not equal to one, because of $p_n
\oplus \mathfrak{a}_{1} \to n_3 \oplus \mathfrak{a}_{n-2}$ and
$\nu_n(\alpha) \oplus \mathfrak{a}_{1} \to \lambda_2 \oplus
\mathfrak{a}_{n-1}.$

It is known that any finite-dimensional associative (Jordan) algebra
 $A$ is decomposed into a semidirect sum of semi-simple subalgebra $A_{ss}$
and nilpotent radical $Rad(A).$ Moreover, an arbitrary finite-dimensional semi-simple
associative (Jordan) algebra contains an identity element. Therefore, one can assume that a
finite-dimensional associative (Jordan) algebra over a field
$\mathbb{K}$ of $char \mathbb{K} = 0$ is either nilpotent or has
an idempotent element.

One of the important results of theory of associative algebras
related with idempotents is Pierce's decomposition. Let $A$ be an
associative algebra which contains an idempotent element $e$. Then
we have decomposition
$$A = A_{1,1} \oplus A_{1,0} \oplus A_{0,1} \oplus A_{0,0}$$ with property
$A_{i,j}\cdot A_{k,l} \subseteq  \delta_{j,k}A_{i,l},$ where
$\delta_{j,k}$ are Kronecker symbols. The subspaces $A_{j,k}$ are
called Pierce's components.

Below we present an analogue of Pierce's decomposition for Jordan algebras.

\begin{thm} \label{th Jacobson}\cite{Jacobson} Let $e$ be an idempotent of a Jordan algebra $J.$ Then we have the
following decomposition into a direct sum of subspaces
$$J  = P_0\oplus P_{\frac 1 2} \oplus P_1,$$
where $P_i  = \{x \in J \  |  \ x \cdot e = i x\}, \ i = 0; \frac 1
2; 1$  and the multiplications for the components $P_i$ are defined as follows:
$$P_1^2 \subseteq P_1, \quad P_1 \cdot P_0 =0, \quad P_0^2 \subseteq P_0,\quad P_0 \cdot P_{\frac 1 2} \subseteq P_{\frac 1 2}, \quad P_1 \cdot P_{\frac 1 2} \subseteq P_{\frac 1 2},
 \quad P_{\frac 1 2}^2 \subseteq P_0\oplus P_1.$$
\end{thm}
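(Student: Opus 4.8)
The plan is to diagonalize the operator of multiplication by $e$ and then extract all six products from one polarized form of the Jordan identity. For $a\in J$ write $L_a\colon J\to J,\ L_a(x)=a\cdot x$; by commutativity $L_a$ coincides with right multiplication by $a$. Substituting $x\mapsto x+\lambda z$ in the Jordan identity $(x^2\cdot y)\cdot x=x^2\cdot(y\cdot x)$, expanding, and comparing coefficients of $\lambda$ gives
\[
(x^2\cdot y)\cdot z+2\bigl((x\cdot z)\cdot y\bigr)\cdot x=x^2\cdot(y\cdot z)+2(x\cdot z)\cdot(y\cdot x)\qquad(\ast)
\]
for all $x,y,z\in J$; read as functions of $y$ this says $[L_z,L_{x^2}]=2[L_{x\cdot z},L_x]$.

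Next I would obtain the eigenspace decomposition. Put $x=e$ in the operator form: since $e^2=e$ this is $[L_z,L_e]=2[L_{e\cdot z},L_e]$ for all $z$. Evaluating both sides on the vector $e$ and using $e\cdot e=e$ together with commutativity, a short computation collapses this to $\bigl(2L_e^2-3L_e+\mathrm{id}\bigr)(e\cdot z)=0$ for every $z$, i.e. the operator identity $2L_e^3-3L_e^2+L_e=0$. The polynomial $2t^3-3t^2+t=2\,t\,(t-\tfrac12)(t-1)$ has the three distinct roots $0,\tfrac12,1$, so $L_e$ is diagonalizable and
\[
J=P_0\oplus P_{\frac 1 2}\oplus P_1,\qquad P_i=\ker(L_e-i\,\mathrm{id})=\{x\in J:\ x\cdot e=i\,x\},
\]
some summands possibly zero (note $e\in P_1$). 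This reproduces the claimed direct-sum statement.

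The multiplication rules then come out of $(\ast)$ specialized at $x=e$ with $y,z$ taken among $e$ and the Peirce components. For $a\in P_\alpha$, $b\in P_\beta$ (with $\alpha,\beta\in\{0,\tfrac12,1\}$, not both $\tfrac12$), inserting $x=e$ and $\{y,z\}=\{a,b\}$ into $(\ast)$ and using $e\cdot a=\alpha a$, $e\cdot b=\beta b$, $e\cdot e=e$ reduces it to a linear relation among $a\cdot b$ and $e\cdot(a\cdot b)$ whose solution pins the Peirce component of $a\cdot b$: one finds $P_\alpha\cdot P_\beta\subseteq P_\gamma$ with $\gamma$ as in the theorem, and in the case $\{\alpha,\beta\}=\{0,1\}$ the same relation, run also with the roles of $y$ and $z$ interchanged, forces $a\cdot b=0$; this handles $P_1\cdot P_0=0$, $P_0\cdot P_{\frac12}\subseteq P_{\frac12}$ and $P_1\cdot P_{\frac12}\subseteq P_{\frac12}$, and (choosing $y=z=e$) also $P_1^2\subseteq P_1$ and $P_0^2\subseteq P_0$. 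For the remaining inclusion $P_{\frac12}^2\subseteq P_0\oplus P_1$ the substitution $\{y,z\}\subseteq P_{\frac12}$ degenerates to a triviality, so I would instead put $x=a\in P_{\frac12}$, $y=z=e$ in $(\ast)$; using $a\cdot e=\tfrac12 a$ this yields $L_e^2(a^2)=L_e(a^2)$, which (since $\tfrac14\neq\tfrac12$) kills the $P_{\frac12}$-component of $a^2$, giving $a^2\in P_0\oplus P_1$, and polarizing $a\mapsto a+b$ (char $\neq 2$) extends this to all products $a\cdot b$ with $a,b\in P_{\frac12}$.

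The one genuinely delicate point is the cubic operator identity $2L_e^3-3L_e^2+L_e=0$: it is not obtained from the Jordan identity evaluated at $e$ (that is vacuous), but only after first polarizing to $(\ast)$, then specializing $x=e$, and finally testing on the distinguished vector $e$ — the order of these three steps is what makes it work. Everything after that is linear algebra together with the finite case check above, the only subtlety being to notice that the $P_{\frac12}\times P_{\frac12}$ product is the one case where $x=e$ in $(\ast)$ is uninformative and must be replaced by the $x\in P_{\frac12}$ substitution.
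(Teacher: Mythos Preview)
The paper does not prove this statement at all: it is quoted from Jacobson's monograph with the citation \cite{Jacobson} attached, and is used only as background for the proof of Theorem~\ref{thJordan}. So there is no ``paper's own proof'' to compare against; your write-up is essentially the standard textbook argument (polarize the Jordan identity, extract the cubic minimal polynomial $2L_e^3-3L_e^2+L_e=0$, then read off the Peirce multiplication table), which is exactly what one finds in Jacobson.

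One small slip: the parenthetical ``(choosing $y=z=e$)'' offered as justification for $P_1^2\subseteq P_1$ and $P_0^2\subseteq P_0$ is wrong --- setting $y=z=e$ tests only the single element $e$, not arbitrary elements of $P_1$ or $P_0$. But you do not need a separate argument here: the very computation you describe for $a\in P_\alpha,\ b\in P_\beta$ with $\beta\neq\tfrac12$, namely that $(\ast)$ at $x=e$ collapses to $(2\beta-1)L_e(a\cdot b)=\alpha(2\beta-1)(a\cdot b)$, already gives $L_e(a\cdot b)=\alpha(a\cdot b)$ whenever $\beta\neq\tfrac12$, and by the symmetric relation also $L_e(a\cdot b)=\beta(a\cdot b)$ whenever $\alpha\neq\tfrac12$. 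Taking $\alpha=\beta\in\{0,1\}$ yields $P_0^2\subseteq P_0$ and $P_1^2\subseteq P_1$ directly. With that correction your argument is complete and correct.
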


\section{Main result}

This section is devoted to the classifications of algebras of level two in the varieties of complex $n$-dimensional Jordan, Lie and associative algebras.

\subsection{Jordan algebras of level two}
In this subsection we give the classification of algebras of level
two in the variety of complex $n$-dimensional Jordan algebras.

\begin{thm}\label{thJordan} A $n$-dimensional $(n \geq 3)$ Jordan algebra is algebra of
level two if and only if it is isomorphic to one of the following
algebras:
$$\begin{array}{rllll}J_1= \{e\}\oplus \mathfrak{a}_{n-1}: & e \cdot e=e;\\[1mm]
J_2 = \{e_1, e_2, e_3, \dots, e_{n}\}: & e_1 \cdot e_1=e_1, & e_1
\cdot
e_{i}=e_i,  & 2 \leq i \leq n;\\[1mm]
J_3= \{e_1, e_2, e_{3}\}\oplus \mathfrak{a}_{n-3}: & e_1 \cdot
e_2=e_3. \end{array}$$ \end{thm}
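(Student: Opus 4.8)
The plan is to prove the theorem in two directions. First I would show that each of $J_1, J_2, J_3$ is a Jordan algebra of level exactly two, and then I would show that no other $n$-dimensional Jordan algebra has level two. For the first direction, one checks routinely that each $J_i$ satisfies the Jordan identity (for $J_1$ and $J_3$ this is trivial; for $J_2$ one verifies $(x^2 y)x = x^2(yx)$ directly from the multiplication table, noting $e_1$ acts as a unit on $\langle e_2,\dots,e_n\rangle$). To pin down the level, I would exhibit for each $J_i$ an explicit chain of two direct degenerations ending at $\mathfrak{a}_n$, and argue the level is not one by comparing with Theorem~\ref{th level one}: none of $J_1, J_2, J_3$ appears in that list, so each has level at least two. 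The natural chains are $J_1 \to \lambda_2\oplus\mathfrak{a}_{n-2}\to\mathfrak{a}_n$ (note $J_1=\{e\}\oplus\mathfrak{a}_{n-1}$ is just $\nu_2(1)\oplus\mathfrak{a}_{n-2}$-type, degenerating to $\lambda_2$), $J_2 \to \nu_n(\tfrac12) \to \mathfrak{a}_n$ (or $J_2\to p_n$, whichever is a genuine direct degeneration), and $J_3 \to n_3\oplus\mathfrak{a}_{n-3}\to\mathfrak{a}_n$; each arrow requires producing an explicit one-parameter family $g_t\in GL_n$ realizing the degeneration, plus a dimension or invariant argument that the intermediate degeneration is direct. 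I also need to check that level is exactly two and not higher, i.e. that $J_i$ does not degenerate to any algebra outside $\{$level-one algebras$\}\cup\{\mathfrak a_n\}\cup\{J_i\}$ in a way that lengthens the chain — here invariants such as $\dim\mathrm{Orb}$, the dimension of the algebra of derivations, and the structure of $\mathrm{Ann}$ are the tools.

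The harder direction is the converse: a complete classification forcing the list to be exactly $J_1,J_2,J_3$. The strategy is to use the semisimple-plus-radical decomposition recalled before Theorem~\ref{th Jacobson}, so that any Jordan algebra of level two is either nilpotent or has a nontrivial idempotent $e$. In the idempotent case I would invoke the Peirce decomposition $J = P_0\oplus P_{1/2}\oplus P_1$ from Theorem~\ref{th Jacobson}. A level-two algebra must degenerate to some level-one algebra, and by Theorem~\ref{th level one} the only level-one Jordan algebras are $\lambda_2\oplus\mathfrak a_{n-2}$ and $\nu_n(\tfrac12)$; this severely constrains $J$. One shows that if $\dim P_1 \geq 2$ or $P_1$ is not of a very restricted form, or if $P_{1/2}$ and $P_0$ interact nontrivially, then $J$ degenerates through a longer chain (level $\geq 3$), which is ruled out. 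I expect the bookkeeping to reduce to: $P_1 = \langle e\rangle$ (one-dimensional), and then the $P_{1/2}$-action of $e$ together with the constraint $P_{1/2}^2\subseteq P_0\oplus P_1$ forces either $P_{1/2}=0$ and $P_0$ abelian (giving $J_1$) or $P_0 = 0$ and $e$ acts as the identity on $P_{1/2}$ (giving $J_2$). The nilpotent case is handled separately: a nilpotent Jordan algebra of level two must degenerate to $\lambda_2\oplus\mathfrak a_{n-2}$, and an analysis of the lower central/associative-type series, together with the classification of level-one nilpotent algebras, forces the multiplication to be one-dimensional in an appropriate sense, yielding $J_3$ (essentially $n_3$ is the anticommutative analogue, and the Jordan analogue is $e_1\cdot e_2 = e_3$).

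The main obstacle will be the converse's case analysis: ruling out the many a priori candidate Jordan algebras of level two that turn out to have level $\geq 3$. The cleanest way to organize this is to establish a few lemmas of the form "if $J$ has such-and-such a substructure (e.g. two independent idempotents, or a two-dimensional Peirce component $P_1$, or a nonabelian $P_0$), then $J$ admits a degeneration to a Jordan algebra of level $\geq 2$ that is not in $\{J_1,J_2,J_3\}$, hence $\mathrm{lev}(J)\geq 3$." Each such lemma needs an explicit degeneration plus an application of the level-one classification to certify the target has level $\geq 2$. Once enough of these obstructions are in place, only the three families survive, and matching them up (including checking there are no further isomorphisms collapsing parameters, and that the dimension constraint $n\geq 3$ is exactly where these become genuinely level-two rather than coinciding with lower-dimensional phenomena) completes the proof. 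I would also take care with the subtlety flagged in the excerpt — that the degeneration notion here is stronger than Gorbatsevich's — so that the chains I produce are degenerations of algebras of the same fixed dimension $n$, with all direct summands $\mathfrak a_k$ accounted for inside that fixed $n$.
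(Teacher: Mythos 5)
Your overall strategy coincides with the paper's: split into the non-nilpotent case (handled via the Peirce decomposition of Theorem \ref{th Jacobson}) and the nilpotent case, produce explicit one-parameter degenerations onto a short list, certify level $\geq 2$ by comparison with Theorem \ref{th level one}, and separate $J_1, J_2, J_3$ by nilpotency and $\dim \Der$. However, your anticipated outcome of the idempotent case contains a genuine error. You predict that the analysis "forces $P_1=\langle e\rangle$" and that $J_2$ arises when "$P_0=0$ and $e$ acts as the identity on $P_{1/2}$". Both claims are wrong: by the definition of the Peirce component $P_{1/2}$ the idempotent acts on it by the scalar $\tfrac12$, never as the identity, and $J_2$ itself has $P_1=J$ (so $\dim P_1=n$) with $P_0=P_{1/2}=0$. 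In particular your proposed obstruction lemma "$\dim P_1\geq 2$ implies level $\geq 3$" is refuted by $J_2$. Followed literally, your case analysis would either miss $J_2$ or end in a contradiction. The correct mechanism is not to bound the sizes of the Peirce components but to rescale all basis vectors outside $\langle e\rangle$ by $t^{-1}$, which degenerates $J$ onto the algebra recording only the action of $e$; the three cases $P_1=\langle e\rangle,\ P_{1/2}=0$ (giving $J_1$), $P_0=P_{1/2}=0$ (giving $J_2$), and "at least two of the three components besides $\langle e\rangle$ nonzero" (which degenerates further onto $J_3$) then exhaust the possibilities.

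Two smaller points. First, the chain $J_2\to p_n$ you float as an alternative is impossible: degenerations of commutative algebras are commutative, and $p_n$ is anticommutative and nonzero. Second, the nilpotent case, which you compress into "an analysis of the lower central series forces the multiplication to be one-dimensional", is where most of the actual work lies, and the statement to be proved there is not that $\dim J^2=1$ but that every nilpotent Jordan algebra other than $\lambda_2\oplus\mathfrak{a}_{n-2}$ and $J_3$ itself degenerates properly onto $J_3$ (hence has level $\geq 3$); this requires separate treatments of $\dim(J^2/J^3)\geq 2$, $\dim(J^2/J^3)=1$, and $\dim J^2=1$, each with a nontrivial preparatory change of basis before a degeneration onto $J_3$ can be written down. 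These gaps are fillable, but as written the plan would not go through without correcting the Peirce-component bookkeeping.
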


\begin{proof} Firstly we suppose that semi-simple part of the Jordan algebra $J$
is non-trivial, i.e., $J_{ss} \neq 0.$ Thereby, there exists an
unite element $e$ of $J_{ss}$ and $J$  admits a basis $\{e, x_1,
x_2, \dots, x_p, y_1, y_2, \dots, y_q, z_1, z_2, \dots, z_r\}$
such that
$$P_1 = \{e, x_1, x_2, \dots, x_p,\} \quad P_0 = \{y_1, y_2,
\dots, y_q\}, \quad P_{\frac 1 2} = \{z_1, z_2, \dots, z_r\}.$$

The assertion of Theorem \ref{th Jacobson} provide the table of
multiplication in this basis:
$$J:\left\{\begin{array}{lll} e\cdot x_i=x_i, & x_i \cdot x_j= \alpha_{i,j}e + \sum\limits_{k=1}^p \beta_{i,j}^k x_k,
& x_i \cdot z_j=  \sum\limits_{k=1}^r \delta_{i,j}^k z_k, \\[1mm]
 & y_i \cdot y_j= \sum\limits_{k=1}^q \gamma_{i,j}^k
y_k, & y_i \cdot z_j= \sum\limits_{k=1}^r \nu_{i,j}^k z_k, \\[1mm]
e\cdot z_i=\frac 1 2 z_i, & z_i \cdot z_j= \lambda_{i,j}e +
\sum\limits_{k=1}^p \mu_{i,j}^k x_k+
\sum\limits_{k=1}^q\theta_{i,j}^k y_k.
\end{array}\right.$$

It is easy to see that condition $p=q=0$ implies the
multiplication
$$e\cdot e=e, \quad e\cdot z_i=\frac 1 2 z_i, \quad z_i \cdot z_j= \lambda_{i,j}e.$$
From Jordan identity we get $\lambda_{i,j}=0$ and the
algebra $\nu_n(\frac 1 2)$ is obtained. However, this algebra is an algebra of level one.

Therefore, we assume that $(p,q) \neq (0,0).$ Taking the
degeneration \[g_t: \ g_t(e)= e, \ g_t(x_i)=t^{-1}x_i, \
g_t(y_j)=t^{-1}y_j, \ g_t(z_k)=t^{-1}z_k,\] we easily obtain that
any Jordan algebra $J$ with condition of non-triviality of
semi-simple part is degenerates to the following algebra
$$e\cdot e=e, \  e\cdot x_i=x_i,\  e\cdot y_j=0, \  e\cdot z_k=\frac 1 2 z_k,
\quad 1 \leq i \leq p, \ 1 \leq j \leq q, \ 1 \leq k \leq r.$$

\begin{itemize}
\item If $p=r=0,$ then we obtain the algebra $J_1.$
\item If $q=r=0,$ then we get the algebra $J_2.$
\item If two of the $p, \ q, \ r$ are non-zero, then denoting by $e_1=e$ and elements $\{x_i, y_j, z_k\}$
by elements $\{e_i\}, 2 \leq i \leq n,$ we rewrite the table of
multiplication as follows:
$$ J(\zeta_i) : \ e_1 \cdot e_1=e_1, \quad  e_1 \cdot
e_{i}= \zeta_i e_i, \quad 2 \leq i \leq n,$$ where $\zeta_i \in
\{0; \frac 1 2; 1\}$ and there exist $i,$ $j$ such that $\zeta_i
\neq \zeta_j.$ Without loss of generality, one can suppose $\zeta_2
\neq \zeta_3.$ Taking the degeneration $g_t$ defined as
\[g_t^{-1}:\left\{\begin{array}{lll} g_t^{-1}(e_1)= t e_1, & g_t^{-1}(e_2)=e_2+e_3,
\\ g_t^{-1}(e_3)=t(\zeta_2 e_2+\zeta_3e_3), & g_t^{-1}(e_{i})=e_{i},
& 4 \leq i \leq n, \end{array}\right.\] we obtain that algebra
$J(\zeta_i)$ degenerates to $J_3.$
\end{itemize}

Now we consider case  of $J_{ss} =0,$ i.e., Jordan algebra $J$ is
nilpotent.

\textbf{Case 1.} Let $dim J^2 \geq 2.$ Then algebra $J$ admits a
basis $\{x_1, x_2,  \dots, x_n\}$ such that $\{x_1, x_2, \dots,
x_{k}\} \in J \setminus J^2$ and $x_{k+1}, x_{k+2} \in J^2.$
Moreover, one can assume $x_1 \in J \setminus J^2$ and $x_1\cdot
x_1 =x_{k+1} \in  J^2\setminus J^3.$

\textbf{Case 1.1.} Let $dim (J^2/ J^3) \geq 2.$ Then $x_{k+2} \in
J^2 \setminus J^3$ and we can suppose  $x_1 \cdot x_2 = x_{k+2}.$

Indeed, if there exists some $i $ such that  $x_1 \cdot x_i \notin
span\langle x_{k+1}\rangle,$ then without loss of generality, we
can suppose $i=2$ and derive $x_1 \cdot x_2 = x_{k+2}.$

Let now $x_1 \cdot x_i \in span\langle x_{k+1}\rangle$ for any
$i.$ We set $x_1\cdot x_i  = \alpha_i x_{k+1}, \  2 \leq i \leq
k.$ The condition $x_{k+2} \in J^2 \setminus J^3$ implies the
existence of $j, \ 2 \leq j \leq k$ such that $x_j\cdot x_j =
x_{k+2}.$ Without loss of generality, one can assume $j=2.$ Hence,
we obtain the products
$$x_1 \cdot x_1 = x_{k+1}, \quad x_1\cdot x_2 =\alpha_2 x_{k+1}, \quad x_2\cdot x_2 = x_{k+2}.$$

Taking the change of basis
$$x_1' = x_1 + A x_2, \ x_2'=x_2, \ x_{k+1}' = (1+2A\alpha_2)x_{k+1} + A^2 x_{k+2}, \ x'_{k+2}= \alpha_2 x_{k+1} + Ax_{k+2},$$
where $A(1+A\alpha_2) \neq 0,$ we derive
$$x_1'\cdot x_1' = x'_{k+1},\quad
x_1'\cdot x_2' =  x'_{k+2}.$$

Therefore, in this subcase we have shown that there exists a
basis $\{x_1,x_2, \dots, x_{k+1}, x_{k+2} \dots, x_n\}$ such that
$$x_1\cdot x_1 = x_{k+1}, \quad x_1\cdot x_2 = x_{k+2}, \quad x_2\cdot x_2 = \gamma_{k+1}x_{k+1}+ \gamma_{k+2}x_{k+2}+ \dots +
\gamma_{n}x_{n}.$$

Taking the degeneration
\[g_t: \left\{\begin{array}{lll} g_t(x_1)= t^{-2} x_1,&
 g_t(x_2)= t^{-2}x_2,\\
 g_t(x_{k+2})=t^{-4}x_{k+2},& g_t(x_{i})=t^{-3}x_{i},  &  i \neq k+2, \ 3 \leq i \leq
n,\end{array}\right. \] we obtain that algebra $J$ degenerates to
algebra with the following table of multiplication:
$$x_1 \cdot x_2 = x_{k+2},\quad x_2 \cdot x_2 = \gamma_{k+2}x_{k+2}.$$

Obviously, this algebra is isomorphic to algebra $J_3$ (by the
basis transformation $x_2':= x_2 - \gamma_{k+2}x_1$ and
$x'_i:=x_i$ for $i\neq 2$).

\

\textbf{Case 1.2.} Let $dim (J^2/ J^3) = 1.$ Then $x_{k+2} \in
 J^3.$ If there exist $i, \ j$ such that $x_1 \cdot
x_i \notin span\langle x_{k+1}\rangle$ or $x_i \cdot x_j \notin
span\langle x_{k+1}\rangle,$ then similarly to Case 1.1 we
conclude that algebra $J$ degenerates to algebra $J_3.$  Now we
consider the case of $x_1 \cdot x_i, \ x_i \cdot x_j \in
span\langle x_{k+1}\rangle.$

We set $$x_1 \cdot x_i = \alpha_{1,i}x_{k+1}, \quad x_i \cdot x_j
\in \alpha_{i,j}x_{k+1}, \qquad 2 \leq i,j \leq k.$$

Due to $x_{k+2} \in J^3,$ we get existence of some $i_0 \ (1 \leq
i_0 \leq k+1)$ such that $x_{i_0} \cdot x_{k+1}= x_{k+2}.$ Without
loss of generality, we can assume $i_0=1.$ Indeed, if $x_1 \cdot
x_{k+1} = 0,$ then taking the change
$$x_1' = x_1 + Ax_{i_0}, \quad x_{k+1}'=(1+2A\alpha_{1,i_0}+A^2\alpha_{i_0,
i_0})x_{k+1}, \quad  x_{k+2}'=(1+2A\alpha_{1,i_0}+A^2\alpha_{i_0,
i_0})A x_{i_0}x_{k+1},
$$ we obtain
$$x_1\cdot x_1 = x_{k+1}, \quad x_1\cdot x_{k+1} = x_{k+2}, \quad x_{k+1}\cdot x_{k+1} = \gamma_{k+2}x_{k+2}+ \dots + \gamma_{n}x_{n}.$$

Taking the degeneration
\[g_t: \left\{\begin{array}{lll} g_t(x_1)= t^{-2} x_1,& g_t(x_{i})=t^{-3}x_{i},  &   2 \leq i \leq
n, \quad i \neq k+1; k+2,\\
 g_t(x_{k+1})= t^{-2}x_{k+1},&
 g_t(x_{k+2})=t^{-4}x_{k+2}, \end{array}\right. \]
we conclude that the algebra $J$ degenerates to algebra with the
following table of multiplications:
$$x_1 \cdot x_{k+1} = x_{k+2},\quad x_{k+1} \cdot x_{k+1} = \gamma_{k+2}x_{k+2},$$
which is isomorphic to $J_3.$

\textbf{Case 2.} Let $dim J^2 = 1.$ Then $J^3=0$ and either $J$ has a three-dimensional indecomposable subalgebra $\widetilde{J}$  with conditions $dim \widetilde{J}^2 =1, \ \widetilde{J}^3=0$ or $J$ is isomorphic to the algebra $\lambda_2 \oplus \mathbb{C}^{n-2}.$ Taking into account that $J$ is not isomorphic to $\lambda_2 \oplus \mathbb{C}^{n-2}$  and that any three-dimensional indecomposable
Jordan algebra satisfying to above conditions is isomorphic to the algebra: $x_1 \cdot x_2 =x_3$ (in denotation of \cite{Kashuba2} this algebra is $T_4$), we conclude that Jordan algebra $J$ admits a basis
$\{x_1, x_2, \dots, x_n\}$ such that the table of multiplications in this basis is as follows:
 $$x_1\cdot x_2 = x_n, \quad x_1\cdot x_i = \alpha_{i}x_n, \quad x_2\cdot x_i =\beta_{i}x_n,
 \quad x_j\cdot x_i =\gamma_{i,j}x_n, \quad 2 \leq i,j \leq n.$$

Taking the following degeneration
\[g_t: \quad g_t(x_1)= x_1, \quad g_t(x_2)= x_2, \quad g_t(x_{n})=x_{n},\quad
 g_t(x_{i})=t^{-1}x_{i},  \quad \ 3 \leq i \leq n-1,\]
we obtain that algebra $J$ degenerates to $J_3.$

In order to complete the proof of theorem we need to establish that algebras $J_1,$ $J_2$ and $J_3$ do not degenerate to each other. For this purpose we shall apply invariant argumentations.

Due to nilpotency of $J_3$ we have $J_1, J_2 \notin \overline{Orb(J_3)}.$
Computing of dimensions of the spaces of derivations we get $$dim( Der(J_1))=n^2-2n+1,\quad
dim( Der(J_2))=n^2-2n+1, \quad dim(Der(J_3 )) = n^2-3n+4.
$$
Since $dim( Der(J_1))=dim( Der(J_2)) \geq dim(Der(J_3 ))$  we obtain that
$J_2, J_3 \notin \overline{Orb(J_1)}$ and $J_1,
J_3 \notin \overline{Orb(J_2)}.$
\end{proof}

\begin{rem}
Note that in the variety of $2$-dimensional Jordan algebras the
algebras of level two are $J_1$ and $J_2.$
\end{rem}

\subsection{Lie algebras of level two}

In this subsection we will describe algebras of level two in the varieties of
complex $n$-dimensional Lie and associative algebras.

We denote by $Lie_n(\mathbb{C})$ the variety of $n$-dimensional complex Lie algebras.

Thanks to work \cite{B1} we have the lists of algebras of
level two in the varieties $Lie_3(\mathbb{C})$ and $Lie_4(\mathbb{C})$. Namely, we can state the next proposition.

\begin{prop}
Algebras of level two of the variety $Lie_3(\mathbb{C})$ up to
isomorphism are the following:
$$\begin{array}{rll}
r_2 \oplus \mathfrak{a}_1: &  [e_1, e_2] = e_2, \\ r_3(\alpha): &
[e_1, e_2] = e_2, &  [e_1, e_3] = \alpha e_3, \ |\alpha|<1, \ or \
\alpha=\pm 1.
\end{array}$$
Algebras of level two of the variety $Lie_4(\mathbb{C})$ up to
isomorphism are the following:
$$\begin{array}{rlll}
n_4: & [e_1,e_2]= e_3, &  [e_1,e_3] = e_4, \\
r_2 \oplus \mathfrak{a}_2: &  [e_1, e_2] = e_2,\\
r_3(1)\oplus \mathfrak{a}_1: & [e_1, e_2] = e_2, & [e_1, e_3] =
e_3, \\
g_{4,1}(\alpha): & [e_1, e_2]= \alpha e_2, & [e_1, e_3] = e_3,
&[e_1, e_4] = e_4, \ \alpha \neq 1, \ \alpha \in \mathbb{C}^{*},
\\
g_{4,2}: & [e_1, e_2]=
 e_2 + e_3, & [e_1, e_3] = e_3,
& [e_1, e_4] = e_4.
\end{array}$$
\end{prop}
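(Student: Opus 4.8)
The plan is to deduce this proposition directly from the complete classification of degenerations in $Lie_3(\mathbb{C})$ and $Lie_4(\mathbb{C})$ obtained in \cite{B1}. The one conceptual point to dispatch first is that the level of a Lie algebra, defined in Definition \ref{def2} via chains of direct degenerations inside the variety of \emph{all} algebras, can in fact be computed inside $Lie_n(\mathbb{C})$: since $Lie_n(\mathbb{C})$ is Zariski closed in $Alg_n$ (it is cut out by anticommutativity and the Jacobi identity), the orbit closure of a Lie algebra consists of Lie algebras, so every term of a chain of degenerations issuing from a Lie algebra is again a Lie algebra, and a degeneration between two Lie algebras is direct in $Alg_n$ if and only if it is direct in $Lie_n$. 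Hence the level equals the length of the longest chain of direct degenerations in the Hasse diagram of $(Lie_n(\mathbb{C}),\to)$, which for $n=3,4$ is exactly the diagram described in \cite{B1}.

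With this in hand the proof is a reading of that diagram. Level $0$ consists of $\mathfrak{a}_n$ alone. By Theorem \ref{th level one}, among the algebras of level one the only anticommutative ones are $p_n$ and $n_3\oplus\mathfrak{a}_{n-3}$, so these are precisely the level-one Lie algebras; for $n=3$ they are $r_3(1)$ and $n_3$, and for $n=4$ they are $p_4$ and $n_3\oplus\mathfrak{a}_1$ (note that $p_4\not\cong r_3(1)\oplus\mathfrak{a}_1$). An $n$-dimensional Lie algebra is then of level two exactly when it is not of level $\le 1$ but every one of its direct degenerations lands on an algebra of level $0$ or $1$; going through the Hasse diagrams of \cite{B1} one reads off that these are $r_2\oplus\mathfrak{a}_1$ together with the level-two members of the family $r_3(\alpha)$ in dimension three, and $n_4$, $r_2\oplus\mathfrak{a}_2$, $r_3(1)\oplus\mathfrak{a}_1$, $g_{4,1}(\alpha)$ with $\alpha\neq 1$, and $g_{4,2}$ in dimension four.

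The main obstacle is the bookkeeping for the one-parameter families $r_3(\alpha)$ and $g_{4,1}(\alpha)$: one must decide for which parameter values the algebra is genuinely of level two rather than of level one (as happens for the degenerate specializations) or of level three, fix a normal form for the parameter modulo the isomorphisms internal to each family (e.g.\ $r_3(\alpha)\cong r_3(\alpha^{-1})$), and check that the list of direct degenerations of each candidate is complete — in \cite{B1} this is carried out by exhibiting an explicit curve $g_t$ realising each asserted degeneration and by ruling out the remaining ones with invariants such as orbit dimension, $\dim\Der$, $\dim[\mathfrak{g},\mathfrak{g}]$ and nilpotency.
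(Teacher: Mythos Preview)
Your proposal is correct and matches the paper's approach: the paper offers no independent argument for this proposition but simply states it as a consequence of the degeneration diagrams in \cite{B1} (and \cite{B2}). Your additional remark that $Lie_n(\mathbb{C})$ is Zariski closed in $Alg_n$, so that level can be read off from the Hasse diagram of $Lie_n$ alone, is a useful clarification that the paper leaves implicit.
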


We consider Lie algebras
$$\begin{array}{rlll}
n_{5,1}\oplus \mathfrak{a}_{n-5}: & [e_1, e_3] = e_5, & [e_2, e_4]
=
e_5, \\
n_{5,2}\oplus \mathfrak{a}_{n-5}: & [e_1, e_2] = e_4, & [e_1,
e_3] = e_5, \\
r_2 \oplus \mathfrak{a}_{n-2}: &  [e_1, e_2] = e_2,\\
g_{n,1}(\alpha): & [e_1, e_2]= \alpha e_2, & [e_1, e_i] = e_i, & 3
\leq i \leq n, \ \alpha \neq 1, \ \alpha \in \mathbb{C}^{*}
\\
g_{n,2}: & [e_1, e_2]=
 e_2 + e_3, & [e_1, e_i] = e_i, & 3
\leq i \leq n.
\end{array}$$

Further we shall need the following lemma.

\begin{lem} \label{lemInvariant}
$$\begin{array}{rclcrcl}
dim( Der(n_{5,1}\oplus \mathfrak{a}_{n-5}))&=&n^2-5n+15, & & dim(
ab(n_{5,1}\oplus \mathfrak{a}_{n-5}))&=&n-2, \\[1mm]
dim(Der(n_{5,2}\oplus \mathfrak{a}_{n-5} )) &=& n^2-5n+13, & &
dim(
ab(n_{5,2}\oplus \mathfrak{a}_{n-5}))&=&n-1, \\[1mm]
dim(Der(r_2 \oplus \mathfrak{a}_{n-2}))&=&n^2-3n+4, & & dim(ab(r_2
\oplus \mathfrak{a}_{n-2}))&=&n-1, \\[1mm]
dim(Der(g_{n,1}(\alpha)))&=&n^2-3n+4, & & dim(ab(g_{n,1}(\alpha)))&=&n-1, \\[1mm]
dim(Der(g_{n,2}))&=&n^2-3n+4, & & dim(ab(g_{n,2}))&=&n-1,
\end{array}$$
where $ab(G)$ is a maximal abelian ideal of $G.$

\end{lem}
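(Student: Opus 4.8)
The plan is to prove Lemma \ref{lemInvariant} by direct computation, treating each of the five algebras in turn, since all the quantities involved are concrete linear-algebraic invariants that depend only on the structure constants.

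For the derivation algebra dimensions, I would write a generic linear map $D$ on the $n$-dimensional space in matrix form $D = (d_{ij})$ and impose the Leibniz rule $D([e_i,e_j]) = [De_i, e_j] + [e_i, De_j]$ on each pair of basis vectors. For each algebra this produces a linear system in the $n^2$ entries $d_{ij}$; the dimension of $\Der$ is $n^2$ minus the rank of that system. The key observation that keeps this manageable is the block structure: in each case the algebra is a (semi)direct sum with a large abelian summand $\mathfrak{a}_{n-k}$ (for $n_{5,1}, n_{5,2}, r_2$) or has all nontrivial brackets involving the single element $e_1$ (for $g_{n,1}(\alpha)$, $g_{n,2}$), so the constraints only couple a bounded number of coordinates while the block acting on and within the abelian part is essentially free. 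Concretely, for $g_{n,1}(\alpha)$ and $g_{n,2}$ the nontrivial relations are $[e_1,e_i]=$ (something in the span of $e_2,\dots,e_n$), and imposing the derivation property pins down the interaction of $D$ with $e_1$ and the adjoint action $\mathrm{ad}(e_1)$, leaving a centralizer-type condition; counting gives $n^2 - 3n + 4$ in all three of the solvable cases, consistent with the fact that these are the higher-dimensional analogues of $r_2\oplus\mathfrak a_2$, $g_{4,1}$, $g_{4,2}$ whose derivation dimensions one can cross-check against \cite{B1}. For the two nilpotent algebras $n_{5,1}\oplus\mathfrak a_{n-5}$ and $n_{5,2}\oplus\mathfrak a_{n-5}$, the brackets live entirely inside a $5$-dimensional indecomposable summand, so $\Der$ decomposes (up to the homomorphisms between the two summands) and the count reduces to: derivations of the $5$-dimensional piece, plus $\Hom$ of the abelian part into the $5$-dimensional one and vice versa, plus $\mathfrak{gl}$ of the abelian part; assembling these yields $n^2-5n+15$ and $n^2-5n+13$ respectively.

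For the maximal abelian ideal dimensions $\dim(ab(G))$, the argument is more elementary and structural. An abelian ideal $I$ must satisfy $[G,I]\subseteq I$ and $[I,I]=0$. In $r_2\oplus\mathfrak a_{n-2}$, $g_{n,1}(\alpha)$, and $g_{n,2}$, the span $\langle e_2,\dots,e_n\rangle$ is an abelian ideal of dimension $n-1$, and it is maximal because any ideal properly containing it would be all of $G$, which is not abelian; so $\dim(ab)=n-1$ in these three cases. For $n_{5,2}\oplus\mathfrak a_{n-5}$ with brackets $[e_1,e_2]=e_4$, $[e_1,e_3]=e_5$, the subspace $\langle e_2,e_3,e_4,e_5\rangle\oplus\mathfrak a_{n-5}$ is an abelian ideal of dimension $n-1$, again maximal for the same reason (adjoining $e_1$ destroys commutativity), giving $n-1$. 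For $n_{5,1}\oplus\mathfrak a_{n-5}$ with $[e_1,e_3]=e_5$, $[e_2,e_4]=e_5$, any abelian ideal cannot contain a pair among $\{e_1,e_3\}$ or $\{e_2,e_4\}$ with nonzero bracket; the largest abelian ideal turns out to be $\langle e_3,e_4,e_5\rangle\oplus\mathfrak a_{n-5}$ of dimension $(n-5)+3 = n-2$, and one must check no $(n-1)$-dimensional abelian ideal exists — this follows since such an ideal would have codimension one, hence would be the kernel of a linear functional, and the bracket structure forces it to contain both members of one of the bracketing pairs.

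The main obstacle is the bookkeeping in the derivation computation for $n_{5,1}$ and $n_{5,2}$: one has to be careful about the cross-terms $\Hom(\mathfrak a_{n-5}, \text{5-dim part})$ and $\Hom(\text{5-dim part}, \mathfrak a_{n-5})$ — not every such homomorphism extends to a derivation, because the image in the $5$-dimensional summand interacts with its bracket — and separately the derivations internal to the $5$-dimensional indecomposable pieces must be computed exactly (these $5$-dimensional calculations are the genuinely computational core, and the discrepancy of $2$ between the two answers, $15$ versus $13$, comes precisely from this internal difference reflecting that $n_{5,1}$ is the more symmetric of the two). Everything else is routine, and the abelian-ideal counts serve as an independent cross-check that will be used in the sequel to separate the orbit closures. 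Finally, I would remark that these dimension values, together with nilpotency versus solvability, are exactly the invariants needed to show none of these five algebras degenerates to another, mirroring the role played by $\dim\Der$ in the Jordan case above.
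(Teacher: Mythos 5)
Your computation is correct, and in fact the paper states Lemma \ref{lemInvariant} with no proof at all, so your direct verification is exactly what is needed: the block decomposition $\dim\Der(B\oplus\mathfrak{a}_m)=\dim\Der(B)+\dim\Hom(B/B^2,\mathfrak{a}_m)+\dim\Hom(\mathfrak{a}_m,Z(B))+m^2$ gives $15+5m+m^2$ and $13+5m+m^2$ for the two nilpotent cases, the centralizer count gives $n^2-3n+4$ in the three solvable cases, and your isotropic-subspace argument correctly rules out a codimension-one abelian ideal in $n_{5,1}\oplus\mathfrak{a}_{n-5}$ (any abelian subspace of the Heisenberg summand projects to an isotropic subspace of the $4$-dimensional symplectic quotient, hence has dimension at most $3$, while a codimension-one ideal would meet $n_{5,1}$ in dimension at least $4$). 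You also correctly identify the one genuine subtlety — that only those cross-homomorphisms vanishing on $B^2$, respectively landing in $Z(B)$, extend to derivations — so the proof is complete.
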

In the following theorem we present a complete list of algebras of level two in the variety $Lie_n(\mathbb{C}), \ n \geq 5$.

\begin{thm}\label{thLie} An arbitrary $n \ (n \geq 5)$-dimensional Lie algebra of level
two is isomorphic to one of the following pairwise non-isomorphic algebras:
$$n_{5,1}\oplus \mathfrak{a}_{n-5}, \quad n_{5,2}\oplus \mathfrak{a}_{n-5},
\quad r_2 \oplus \mathfrak{a}_{n-2} \quad g_{n,1}(\alpha), \quad g_{n,2}.$$
\end{thm}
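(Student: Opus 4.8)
The plan is to mirror the structure of the proof of Theorem~\ref{thJordan}: first show that every $n$-dimensional Lie algebra of level two degenerates to one of the five listed algebras, and then separate the five algebras from each other (and from the abelian algebra and the level-one algebras) using invariants, chiefly $\dim\Der(-)$ and $\dim ab(-)$ supplied by Lemma~\ref{lemInvariant}. For the first part I would argue by cases on the structure of the derived algebra and the lower central series. Since the level of $\lambda$ is two, $\lambda$ must degenerate to a level-one algebra, so by Theorem~\ref{th level one} it degenerates to one of $p_n$, $n_3\oplus\mathfrak{a}_{n-3}$, $\lambda_2\oplus\mathfrak{a}_{n-2}$, $\nu_n(\alpha)$; but $\lambda$ is a Lie algebra, so it is anticommutative, and among the level-one algebras only the anticommutative ones ($n_3\oplus\mathfrak{a}_{n-3}$, and the anticommutative specialisation of $p_n$, which up to sign is $r_2\oplus\mathfrak{a}_{n-2}$'s degeneration target) can lie in the closure; this already constrains $\lambda$ substantially.

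Concretely I would split into the solvable-nonnilpotent case and the nilpotent case. In the first case $\lambda$ has a nontrivial torus action (an element acting semisimply or with a nontrivial Jordan form on an ideal), and I expect a rescaling degeneration $g_t$ of the same flavour as in the Jordan proof — sending $e_1\mapsto e_1$ and the remaining basis vectors to $t^{-1}e_i$ — to collapse $\lambda$ onto an algebra of the form $[e_1,e_i]=\zeta_i e_i$ (plus possibly one Jordan block $[e_1,e_2]=e_2+e_3$). Then a secondary degeneration in the spirit of the $J(\zeta_i)\to J_3$ step separates the non-equal eigenvalues and lands on $r_2\oplus\mathfrak{a}_{n-2}$, $g_{n,1}(\alpha)$, or $g_{n,2}$; one must check that if all nonzero $\zeta_i$ are equal one gets a level-one algebra (hence excluded), forcing at least two distinct eigenvalues or a Jordan block, which is exactly what characterises the three solvable algebras in the list. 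In the nilpotent case, $\dim\lambda^2$ and $\dim(\lambda^2/\lambda^3)$ control everything: if $\lambda$ is nilpotent of class $\geq 3$ or has $\dim\lambda^2\geq 3$ one builds a degeneration lowering the algebra to something of level $\geq 2$ that still degenerates further, contradicting minimality; the surviving nilpotent possibilities are the two five-dimensional nilpotent algebras $n_{5,1}\oplus\mathfrak{a}_{n-5}$ (class $2$, $\dim\lambda^2=1$ but with a nondegenerate alternating form structure on $\lambda/\lambda^2$) and $n_{5,2}\oplus\mathfrak{a}_{n-5}$ (class $2$, $\dim\lambda^2=2$). Here one uses that the two-step nilpotent Lie algebras are classified by alternating bilinear maps $\Lambda^2(V)\to W$, and the condition ``level exactly two'' pins down the rank/corank data to precisely these two; any larger two-step algebra degenerates to one of them, and $n_3\oplus\mathfrak{a}_{n-3}$ has level one so is excluded from the bottom.

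For the separation part I would argue as follows. The two nilpotent algebras cannot be reached from the three solvable ones, nor vice versa, because degeneration cannot decrease nilpotency/solvability class in the wrong direction — more precisely, nilpotent algebras form a closed subvariety, so $g_{n,1}(\alpha), g_{n,2}, r_2\oplus\mathfrak{a}_{n-2}\notin\overline{\Orb(n_{5,i}\oplus\mathfrak{a}_{n-5})}$. To separate $n_{5,1}$ from $n_{5,2}$: Lemma~\ref{lemInvariant} gives $\dim\Der(n_{5,1}\oplus\mathfrak{a}_{n-5})=n^2-5n+15 > n^2-5n+13=\dim\Der(n_{5,2}\oplus\mathfrak{a}_{n-5})$, and since $\dim\Der$ does not increase under degeneration we get $n_{5,1}\oplus\mathfrak{a}_{n-5}\notin\overline{\Orb(n_{5,2}\oplus\mathfrak{a}_{n-5})}$; for the reverse non-degeneration one uses $\dim ab(n_{5,1}\oplus\mathfrak{a}_{n-5})=n-2 < n-1=\dim ab(n_{5,2}\oplus\mathfrak{a}_{n-5})$ together with the fact that $\dim ab$ is non-decreasing under degeneration, so $n_{5,2}\oplus\mathfrak{a}_{n-5}\notin\overline{\Orb(n_{5,1}\oplus\mathfrak{a}_{n-5})}$. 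Finally, among $r_2\oplus\mathfrak{a}_{n-2}$, $g_{n,1}(\alpha)$ and $g_{n,2}$ all three have $\dim\Der=n^2-3n+4$ and $\dim ab=n-1$, so these invariants do not separate them; here I would instead observe that these are mutually non-isomorphic (distinct Jordan data for the adjoint of $e_1$) and that a level-two algebra admits no nontrivial degeneration into another level-two algebra by definition of level, which finishes the argument. The main obstacle I anticipate is the nilpotent case: ruling out every two-step nilpotent Lie algebra other than $n_{5,1}, n_{5,2}$ (and every higher-class one) requires a careful degeneration analysis of alternating maps $\Lambda^2 V\to W$, showing that anything of larger ``complexity'' genuinely sits above these two in the degeneration poset while these two sit only one level above $n_3\oplus\mathfrak{a}_{n-3}$ or directly above $\mathfrak{a}_n$; getting the boundary cases exactly right (rather than off by one in dimension) is where the real work lies.
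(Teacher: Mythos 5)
Your overall strategy coincides with the paper's: show every Lie algebra that is not of level $\le 1$ degenerates to one of the five listed algebras, then separate the five by invariants. But there are concrete gaps in both halves. In the reduction step you split only into ``solvable non-nilpotent'' and ``nilpotent'' and never treat algebras with nontrivial Levi part; the paper needs a separate argument (Part III of its proof) using the root decomposition of a simple ideal to produce a degeneration onto a solvable algebra with $\mathrm{ad}(x_1)$ having eigenvalues $1$ and $-1$, hence of level $\ge 2$, so that such $G$ has level $\ge 3$. Your nilpotent case is also essentially asserted rather than proved: the claim that class $\ge 3$ or $\dim\lambda^2\ge 3$ leads to a ``contradiction with minimality'' invokes no hypothesis that exists (there is no minimality to contradict — you must exhibit an explicit degeneration onto $n_{5,2}\oplus\mathfrak{a}_{n-5}$), and the reduction to alternating maps $\Lambda^2V\to W$ only covers two-step algebras, whereas the delicate subcase in the paper is precisely the class-$\ge 3$ situation where all brackets of generators lie in a line and one must bring $[x_1,x_{k+1}]$ into play. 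The actual content of the theorem lives in these explicit basis changes and one-parameter subgroups, which your proposal does not supply.

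The separation step also has errors. First, ``nilpotent algebras form a closed subvariety'' rules out only $n_{5,i}\oplus\mathfrak{a}_{n-5}\to(\text{non-nilpotent})$; it says nothing about $r_2\oplus\mathfrak{a}_{n-2}\to n_{5,i}\oplus\mathfrak{a}_{n-5}$ (non-nilpotent algebras do degenerate to nilpotent ones in general), and for $n=5$ the derivation dimensions alone do not exclude $r_2\oplus\mathfrak{a}_{3}\to n_{5,1}$ — one needs $\dim ab$ here, as in Lemma \ref{lemInvariant}. Second, the monotonicity you use is backwards: with the paper's convention, $\lambda\to\mu$ forces $\dim\Der(\lambda)<\dim\Der(\mu)$ and $\dim ab(\lambda)<\dim ab(\mu)$, so it is $\dim\Der$ that forbids $n_{5,1}\oplus\mathfrak{a}_{n-5}\to n_{5,2}\oplus\mathfrak{a}_{n-5}$ (it would need $15<13$) and $\dim ab$ that forbids the reverse (it would need $n-1<n-2$) — the opposite of your assignment. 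Third, your claim that equal values of $\dim\Der$ ``do not separate'' $r_2\oplus\mathfrak{a}_{n-2}$, $g_{n,1}(\alpha)$, $g_{n,2}$ misses that the inequality is \emph{strict}: equality of $\dim\Der$ already excludes any nontrivial degeneration between them, which is exactly how the paper concludes. Your substitute argument — that a level-two algebra cannot degenerate to a level-two algebra — is circular here, since the level of these algebras is precisely what is being established; the non-degeneration must be proved by an independent invariant before the levels can be read off.
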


\begin{proof}
\textbf{I.}  Firstly, we consider where $G$ is a nilpotent algebra. We distinguish the following cases.

\textbf{Case 1.} Let $Dim G^2 =1.$ Then $G$ is isomorphic to either Heisenberg algebra
 $H_{n=2k+1}$ or $H_{2k+1} \oplus \mathfrak{a}_{n-2k-1}.$ Thus, there exists a basis $\{x_1, x_2,
\dots, x_k, y_1, y_2, \dots, y_k, z, p_{1}, \dots, p_{n-2k-1}\}$
of $G$ such that $[x_i, y_i] = z, \ 1\leq i \leq k.$

Clearly, $k\geq 2$ because otherwise $G$ is an algebra of level one. Taking the degeneration
\[g_t: \left\{\begin{array}{llll} g_t(x_1)=  x_1,& g_t(x_2)=  x_2,& g_t(x_{i})=t^{-1}x_{i},  &  3 \leq i \leq
k,\\
g_t(y_1)=  y_1,& g_t(y_2)=  y_2,&
g_t(y_{i})=t^{-1}y_{i},  &  3 \leq i \leq k,\\
g_t(z)=  z, \end{array}\right.
\]
we obtain that algebra $G$ degenerates to $n_{5,1}\oplus
\mathfrak{a}_{n-5}.$

\textbf{Case 2.} Let $Dim G^2 \geq 2.$ We suppose that $\{x_1, x_2, \dots,
x_k\}$ are generator basis elements of $G.$ Then, without loss of generality, we can
assume $[x_1, x_2] = x_{k+1}.$

Below we show that it may always be assumed $$[x_1, x_2]
= x_4, \ [x_1, x_3] = x_5.$$

\begin{itemize}
\item Let there exists $i_0$ such that $[x_1, x_{i_0}] \notin
span\langle x_{k+1}\rangle,$ then taking
$$x'_1= x_1, \  x'_2 = x_2, \ x'_3 = x_{i_0}, \ x'_4 = x_{k+1}, \ x'_5 = [x_1, x_{i_0}] $$
we obtain $[x'_1, x'_2] = x'_4, \ [x'_1, x'_3] = x'_5.$

\item Let $[x_1, x_{i}] \in span\langle x_{k+1}\rangle$ for all $3 \leq i \leq k$ and
there exists some $i_0$ such that $[x_2, x_{i_0}] \notin
span\langle x_{k+1}\rangle.$ Due to symmetricity of $x_1$ and
$x_2,$ similarly to the previous case we can choose a basis
$\{x'_1, x'_2, \dots, x'_n\}$ with condition $[x'_1, x'_2] = x'_4,
\ [x'_1, x'_3] = x'_5.$

\item Let $[x_1, x_{i}],\  [x_2, x_{i}] \in span\langle x_{k+1}\rangle$ for all $3 \leq i \leq
k.$ We set $[x_1, x_{i}] = \alpha_{i}x_{k+1}$ and $[x_2, x_{i}] =
\beta_{i}x_{k+1}.$ Let $x_{i_0}$ and $x_{j_0}$ are generators of
$G$ such that $[x_{i_0}, x_{j_0}] \notin span\langle
x_{k+1}\rangle.$ Since $dim G^2 \geq 2$ one can assume $[x_{i_0},
x_{j_0}] = x_{k+2}.$

Putting $$x'_1 = x_1 + Ax_{i_0},  \ x'_2= x_2,  \ x'_3 = x_{j_0},
\ x'_{4} = (1-A\beta_{i_0})x_{k+1}, \ x'_5 = Ax_{k+2} +
\alpha_{i_0} x_{k+1}$$ with $A(1-A\beta_{i_0}) \neq 0,$ we deduce
$[x'_1, x'_2] = x'_4, \ [x'_1, x'_3] = x'_5.$

\item Let $[x_i, x_{j}] \in span\langle x_{k+1}\rangle$ for all $1 \leq i,j \leq
k.$ Then for some $i_0$ we have $[x_{i_0}, x_{k+1}] \neq 0.$
Without loss of generality, one can assume $[x_1, x_{k+1}] =
x_{k+2}.$
    \begin{itemize} \item If $k\geq 3,$ then setting
    $$x'_1 = x_1,  \ x'_2= x_2,  \ x'_3 =
    x_{3}+x_{k+1}, \ x'_{4} = x_{k+1}, \ x'_5 = x_{k+2} + \alpha_{1,3} x_{k+1},$$ we obtain $[x'_1, x'_2] = x'_4, \ [x'_1, x'_3] =
x'_5.$

    \item If $k=2,$ then we have $[x_1,x_2]=x_3, \ [x_1,x_3]=x_4.$
It is not difficult to obtain that
    $[x_1,x_4]=x_5$ or $[x_2,x_3]=x_5$ (because of $n \geq 5$). Taking $$x'_1 = x_1,  \ x'_2= x_2,  \ x'_3 =    x_{4}, \ x'_{4} = x_{3}, \ x'_5 = x_{5}$$ in the case of $[x_1,x_4]=x_5$ and
    $$x'_1 = -x_3,  \ x'_2= x_1,  \ x'_3 =x_{4}, \ x'_{4} = x_{2}, \ x'_5 = x_{5}$$ in the case of $[x_2,x_3]=x_5,$ we derive the products $[x_1, x_2] = x_4, \ [x_1, x_3] = x_5.$

    \end{itemize}
\end{itemize}

Thus, there exists a basis $\{x_1, x_2, x_3, \dots, x_n\}$ of $G$
with the products $$[x_1, x_2] = x_4, \ [x_1, x_3] = x_5.$$

Note that $G$ degenerates to the algebra with multiplication:
$$[x_1, x_2] = x_4, \ [x_1, x_3] = x_5, \ [x_2, x_3] = \gamma_4 x_4 + \gamma_4 x_5$$
via the following degeneration:
\[g_t: \left\{\begin{array}{llll} g_t(x_1)= t^{-2} x_1,& g_t(x_2)= t^{-2} x_2,& g_t(x_3)= t^{-2}
x_3,&\\
g_t(x_4)= t^{-4} x_4,& g_t(x_5)= t^{-4} x_5,&
g_t(x_{i})=t^{-3}x_{i},  & 6 \leq i \leq n. \end{array}\right. \]

From the change of basis $x_2'=x_2 - \gamma_5x_1,$ $x_3'=x_3 +
\gamma_4x_1,$ we obtain that this algebra is isomorphic to
$n_{5,2} \oplus \mathfrak{a}_{n-5}.$

\textbf{II.}  Let $G$ be a solvable Lie algebra with nilradical
$N.$ Since the nilradical $N$ degenerates to the abelian algebra,
we conclude that any solvable Lie algebra degenerates to the
solvable algebra with abelian nilradical. Therefore, one can
assume that $G$ is a solvable Lie algebra with abelian nilradical.

Moreover, if $codim N \geq 2,$ then choosing
a basis $\{x_1, x_2, x_3, \dots, x_n\}$ such that $\{x_1, x_2, \dots, x_k\}$ is a basis of complementary space to $N$ and taking the degeneration
$$g_t(x_1) = x_1, \ g_t(x_2) = t^{-1}x_2, \dots,  \ g_t(x_k) = t^{-1}x_k, \ g_t(x_{k+1}) = x_{k+1},
  \dots, g_t(x_{n}) = x_{n},$$
we obtain that $G$ degenerates to a solvable Lie algebra with
nilradical of codimension equal to 1.

Therefore, we assume that algebra $G$ admits a basis $\{x_1, x_2,
\dots, x_{n}\}$ with nilradical $N = \{x_2, x_3, \dots, x_{n}\}$
and restriction of the operator $ad({x_1})$ on $N$ has a Jordan
normal form $ad({x_1})_{| N} = (J_{k_1}, J_{k_2}, \dots J_{k_s}).$

It is easy to see that, if the operator $ad({x_1})_{| N}$ is a scalar matrix, that is, $ad({x_1})_{| N}$ has a unique eigenvalue and $k_i=1$ for all $i \ (1 \leq i \leq s),$ then $G$ is
an algebra of level one (namely, $G \cong p_n$).

Let operator $ad({x_1})_{| N}$ has a unique eigenvalue, but there exists a
Jordan block of order greater than one. One can assume
$k_1\geq 2.$ Taking the degeneration
\[g_t: \left\{\begin{array}{llll} g_t(x_1)= x_1,& g_t(x_2)= t^{2-k_1} x_2,\\ g_t(x_i)= t^{i-1-k_1}
x_i,& 3 \leq i \leq k_1+1,\\
g_t(x_{k_1+\dots+k_{j-1}+i})= t^{i-1-k_j}
x_{k_1+\dots+k_{j-1}+i},& 2 \leq j \leq s, & 2 \leq i \leq k_j+1,
\end{array}\right. \]
we obtain that $G$ degenerates to the algebra $g_{n,2}.$

Let operator $ad({x_1})_{| N}$ has different eigenvalues. Taking the following degeneration:
\[g_t: g_t(x_1)= x_1, \
g_t(x_{k_1+\dots+k_{j-1}+i})= t^{i-1-k_j}
x_{k_1+\dots+k_{j-1}+i},\quad 1 \leq j \leq s, \ 2 \leq i \leq
k_j+1, \] we conclude that algebra $G$ degenerates to an algebra of the family:
\[\begin{array}{rlll}  g_{n,1}(\alpha_3, \dots \alpha_{n}): & [x_1,
x_2]= x_2, &  [x_1, x_i] = \alpha_i x_i, & 3 \leq i \leq n, \
(\alpha_3, \dots \alpha_{n}) \neq (1, \dots, 1).\end{array}\]

Note that $g_{n,1}(0,0, \dots 0)$ is isomorphic to the algebra $r_2 \oplus
\mathfrak{a}_{n-2}$ and algebras $g_{n,1}(1, \dots 1, \alpha_j,
1, \dots 1)$ with $\alpha_j \neq 1$ and $g_{n,1}(\alpha, \alpha,
\dots \alpha)$ with $\alpha \neq 1$ are isomorphic to
$g_{n,1}(\alpha).$

For the rest cases of parameters $\alpha_i$ we can assume that $\alpha_3 \neq 1$ and $\alpha_4 \neq
\alpha_5.$

Making the basis transformation
$$e_1 = x_1, \ e_2 = x_2+ x_3, \ e_3 = x_2+\alpha_3 e_3, \ e_4 = x_4+x_5,  \ e_5 = \alpha_4 x_4+ \alpha_5 x_5,
 \ e_i = x_i, \quad 6 \leq i \leq n,$$
we get the multiplication
\[\begin{array}{llll} [e_1,e_2] = e_3, & [e_1,e_3] = -\alpha_3 e_2 +
(1+\alpha_3)e_3, \\[1mm]  [e_1,e_4] = e_5, & [e_1,e_5] =
 -\alpha_4\alpha_5 e_4 + (\alpha_4 +\alpha_5)e_5, & [e_1,e_i] = e_i,
& 6 \leq i \leq n.\end{array}\]

Similarly to the nilpotent case,  $G$ degenerates to the algebra
$n_{5,2} \oplus \mathfrak{a}_{n-5}$ via degeneration
\[g_t: \left\{\begin{array}{llll}g_t(x_1) = t^{-1}x_1, & g_t(x_2) = t^{-1}x_2, &  g_t(x_3) = t^{-2}x_3, \\[1mm]
g_t(x_4) = t^{-1}x_4, & g_t(x_4) = t^{-2}x_5, &
 g_t(x_i) = x_i,  & 6 \leq i \leq n.\end{array}\right.\]

\textbf{III.} Let $G$ has not-trivial semi-simple part. Due to
Levi's decomposition we have $G=(S_1\oplus \dots \oplus
S_k)\dot{+}R,$ where $S_i$ are simple Lie ideals and $R$ is
solvable radical. From the classical theory of Lie algebras
\cite{Jac1} we know that any simple Lie algebra $S$ has root
decomposition with respect to regular element $x$. Namely we have
$$S=S_0 \oplus S_{\alpha}  \oplus S_{-\alpha}\oplus S_{\beta}  \oplus S_{-\beta}\oplus \dots \oplus
 S_{\gamma}  \oplus S_{-\gamma}, \ x\in S_0.$$

Let $\{x_1, x_2, \dots, x_n\}$ be a basis such that $x_1=x, \ x_2\in S_{\alpha}$ and $x_3\in S_{-\alpha}$ with $\alpha\neq0$. Then $[x_1, x_2]= \alpha x_2$ and $[x_1, x_3] = - \alpha x_3.$ By scaling of basis elements we can assume that $\alpha = 1.$

Taking the degeneration $$g_t(x_1) = x_1, \ g_t(x_i) = t^{-1}x_i,
\ 2 \leq i \leq n,$$ we obtain that $G$ is degenerated to the following algebra:
$$[x_1, x_2] = x_2, \quad [x_1, x_3] = - x_3, \quad [x_1, x_i] \in lin<x_2, x_3, \dots, x_n>.$$ Obviously, this
solvable algebra is not an algebra of level one (From Case
\textbf{II}).

Hence, any Lie algebra $G$ with non-trivial semi-simple part has
not level two.

Thus, we have proved that any Lie algebra, which is not level one,
degenerates to one of the algebras:
$$ n_{5,1}\oplus \mathfrak{a}_{n-5}, \quad n_{5,2}\oplus
\mathfrak{a}_{n-5}, \quad r_2 \oplus \mathfrak{a}_{n-2}, \quad
g_{n,1}(\alpha), \quad g_{n,2}.$$

Taking into account that the property $\lambda \rightarrow \mu$ implies $dim Der(\lambda) < dim Der(\mu)$ and $dim ab(\lambda) < dim ab(\mu)$ and Lemma \ref{lemInvariant} we conclude that these
algebras do not degenerate to each other.
\end{proof}

Applying similar techniques as in the proof of Theorems \ref{thJordan} and  \ref{thLie} we obtain
the list of $n$-dimensional associative algebras of level two.

\begin{thm} Any $n$-dimensional associative algebra of level two
is isomorphic to one of the following algebras:
$$\begin{array}{rllll}A_1: & e \cdot e=e;\\[1mm]
A_2: & e_1 \cdot e_1=e_1, & e_1 \cdot
e_{i}=e_i, &  e_i \cdot e_1 = e_i, & 2 \leq i \leq n;\\[1mm]
A_3: & e_1 \cdot e_1=e_1, & e_1 \cdot
e_{i}= e_i, &  & 2 \leq i \leq n;\\[1mm]
A_4: & e_1 \cdot e_1=e_1, &  e_i \cdot e_1 = e_i, & & 2 \leq i \leq n;\\[1mm]
A_5(\alpha) :&
e_2\cdot e_1=e_3,&
 e_1\cdot e_2=\alpha e_3, & & \alpha \neq \alpha^{-1};\\[1mm]
A_6: &  e_1\cdot e_1=e_3, & e_2\cdot e_1= e_3, & e_1\cdot e_2=-e_3.\\
 \end{array}$$
\end{thm}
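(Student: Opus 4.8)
The plan is to mirror the structure of the proofs of Theorems \ref{thJordan} and \ref{thLie}: first reduce an arbitrary associative algebra to one of the candidates $A_1,\dots,A_6$ by exhibiting explicit degenerations, then separate the six candidates using dimension of derivation algebras (and, where those coincide, finer invariants). By the remark preceding Theorem \ref{th Jacobson}, an associative algebra over $\mathbb{C}$ is either nilpotent or contains an idempotent $e$, so I would split the argument accordingly.

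\textbf{The idempotent case.} Suppose $A$ has an idempotent $e$ and use the Pierce decomposition $A = A_{1,1}\oplus A_{1,0}\oplus A_{0,1}\oplus A_{0,0}$. First I would record that if all of $A_{1,0}, A_{0,1}, A_{0,0}$ vanish and $A_{1,1}=\mathbb{C}e$ we get $A_1$; this is the associative analogue of the step $p=q=0$ in Theorem \ref{thJordan}. As in that proof, a diagonal contraction $g_t(e)=e$, $g_t(v)=t^{-1}v$ for $v$ in the other Pierce components kills all products except those involving $e$ as a one-sided or two-sided unit, so $A$ degenerates to the algebra with $e\cdot e=e$, $e$ acting as identity on $A_{1,1}$, as a left identity on $A_{1,0}$, as a right identity on $A_{0,1}$, and trivially on $A_{0,0}$. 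If only one of the four Pierce components is non-trivial besides $\mathbb{C}e$, this limit is already one of $A_2$ (two-sided), $A_3$ (left), $A_4$ (right), or is $A_1\oplus\mathfrak{a}_{n-1}$ (from $A_{0,0}$), the latter being level one. If at least two of the spaces $A_{1,1}/\mathbb{C}e,\ A_{1,0},\ A_{0,1},\ A_{0,0}$ are non-trivial, one then has an algebra of the form $e_1\cdot e_1=e_1$ with each $e_i$ ($i\ge2$) satisfying $e_1 e_i=\epsilon_i e_i$, $e_i e_1=\eta_i e_i$ with $(\epsilon_i,\eta_i)\in\{0,1\}^2$ not all equal; a two-index change of basis analogous to the $g_t^{-1}$ used for $J(\zeta_i)$ in Theorem \ref{thJordan} should produce a degeneration to a three-dimensional non-unital piece, which I expect to land on $A_5(\alpha)$ or $A_6$ after normalizing.

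\textbf{The nilpotent case.} Here I would run the same $J^2$/$J^3$ trichotomy as in Theorem \ref{thJordan}, but keeping track of left/right products since multiplication is no longer commutative. If $\dim A^2\ge2$ one picks generators $x_1,x_2$ and analyzes $x_1x_1$, $x_1x_2$, $x_2x_1$: the relevant two- and three-dimensional nilpotent associative algebras (in the classification of low-dimensional associative algebras) are exactly $A_5(\alpha)$ and $A_6$ together with level-one pieces, and explicit weight-grading contractions $g_t(x_i)=t^{-d_i}x_i$ as in Cases 1.1--1.2 of Theorem \ref{thJordan} collapse the higher terms. If $\dim A^2=1$ then $A^3=0$ and $A$ either is $\lambda_2\oplus\mathfrak{a}_{n-2}$ (level one, excluded) or contains a three-dimensional indecomposable with one-dimensional square, and a $g_t$ that fixes $x_1,x_2,x_n$ and scales the rest degenerates $A$ to one of $A_5(1)$ or $A_6$; the normalization $\alpha\neq\alpha^{-1}$ (equivalently $\alpha\neq\pm1$) comes from the isomorphism $A_5(\alpha)\cong A_5(\alpha^{-1})$ obtained by swapping factors, and $A_5(1)$, $A_5(-1)$, $A_6$ are the borderline cases one must check directly are genuinely level two rather than level one.

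\textbf{Separating the candidates.} To finish, I would compute $\dim\operatorname{Der}(A_i)$ for each $i$ and observe, as in Theorem \ref{thLie}, that $\lambda\to\mu$ forces $\dim\operatorname{Der}(\lambda)<\dim\operatorname{Der}(\mu)$; the three nilpotent algebras $A_5(\alpha), A_6$ cannot be degenerations of the idempotent ones $A_1,\dots,A_4$ (an idempotent cannot appear in the closure of a nilpotent orbit, being a non-nilpotent Zariski-closed condition), and among $A_1,\dots,A_4$ one notes $A_2$ is genuinely different from $A_3,A_4$, while $A_3$ and $A_4$ are anti-isomorphic hence have equal derivation dimension and neither degenerates to the other. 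The main obstacle I anticipate is the bookkeeping in the idempotent case with two or more non-trivial Pierce components: unlike the Jordan situation there are four components and asymmetric products, so one must be careful that the candidate degeneration $g_t^{-1}$ is well-defined (invertible for $t\neq0$) and that the resulting three-dimensional limit really is $A_5(\alpha)$ or $A_6$ and not an already-abelian or level-one algebra — this requires checking a small number of sign/scaling normalizations rather than any deep argument, but it is where the proof is most error-prone.
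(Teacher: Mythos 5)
The paper in fact offers no proof of this theorem beyond the single sentence preceding it, so your plan can only be judged against the Jordan and Lie arguments it is meant to imitate; in outline it is certainly the intended approach. But the step you yourself flag as merely ``error-prone'' --- the idempotent case with two or more non-trivial Pierce components --- does not just need careful bookkeeping: it breaks. Take $B = \mathbb{C}e_1\oplus A_{1,0}\oplus A_{0,1}\oplus\mathfrak{a}_{n-3}$ with both one-sided components one-dimensional, i.e.\ after your diagonal contraction the associative algebra $B:\ e_1e_1=e_1,\ e_1e_2=e_2,\ e_3e_1=e_3$ (all other products zero). The $J(\zeta_i)$-style substitution $f_1=te_1$, $f_2=e_2+e_3$, $f_3=te_2$ gives $f_1f_2=te_2=f_3$ and $f_2f_1=te_3=tf_2-f_3\to -f_3$, while every other product tends to zero; the limit is $f_1f_2=f_3,\ f_2f_1=-f_3$, which is $n_3\oplus\mathfrak{a}_{n-3}$ --- an algebra of level \emph{one}, so the contraction proves nothing. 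Worse, no other degeneration rescues this case: the symmetrized product $x\circ y=\tfrac12(xy+yx)$ is a continuous $GL_n$-equivariant functor, the symmetrization of $B$ is $\nu_n(\tfrac12)$ (level one as a Jordan algebra, so its orbit closure contains only itself and $\mathfrak{a}_n$), whereas the symmetrizations of $A_1$, $A_2$, $A_5(\alpha)$ with $\alpha\neq-1$, and $A_6$ are $J_1$, $J_2$, $J_3$ and $\lambda_2\oplus\mathfrak{a}_{n-2}$ respectively, none of which lies in $\overline{Orb(\nu_n(\tfrac12))}$; and $A_3$, $A_4$ are excluded because the closed invariant condition $\mathrm{tr}(L_x)=\mathrm{tr}(R_x)$ for all $x$ holds in $B$ but fails in $A_3$ and $A_4$. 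So $B$ degenerates to nothing on the list, and your reduction leaves a genuine hole (indeed $B$ itself appears to be of level two).

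A second gap is that your final step only separates the six candidates from one another; it never verifies that each of them actually \emph{has} level two, i.e.\ admits a chain of length exactly two. This is not a formality here: $A_3$ and $A_4$ coincide with $\nu_n(1)$ and $\nu_n(0)$, which the paper's own Theorem \ref{th level one} classifies as algebras of level one (and one can check directly that $x\cdot x\in\mathbb{C}x$ for every $x\in A_3$, that its commutator algebra is $p_n$ and its symmetrization is $\nu_n(\tfrac12)$, both of level one, so $A_3$ has no non-abelian proper degenerations at all). Relatedly, the commutative algebra $A_5(1)$ ($e_1e_2=e_2e_1=e_3$, i.e.\ $J_3$) arises as a limit in your Pierce analysis when $A_{1,1}\supsetneq\mathbb{C}e$ and $A_{0,0}\neq0$, cannot degenerate to the non-commutative $A_5(\alpha)$ or $A_6$, and so seems to be of level two even though the condition $\alpha\neq\alpha^{-1}$ excludes it; your suggestion that the borderline values only need to be ``checked directly'' points the wrong way for $\alpha=1$. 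Finally, a small slip: the case where only $A_{0,0}$ is non-trivial yields $A_1$ itself, which is on the level-two list, not a level-one algebra as you state.
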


\textbf{Acknowledgment.} The author would like to gratefully
acknowledge for support to IMU/CDC-program.

\end{document}